\theoremstyle{plain}
\newtheorem{thm}{Theorem}
\newtheorem{conj}[thm]{Conjecture}
\theoremstyle{definition}
\title{A generalisation of Seymour's second neighbourhood conjecture}
\author{Jeck Lim\footnote{Trinity College, Cambridge CB2 1TQ, UK.\quad Email: jl945@cam.ac.uk}}
\date{January 19, 2020}
\begin{document}

\maketitle

\begin{abstract}
In this note we propose a generalisation of Seymour's Second Neighbourhood Conjecture to two directed graphs on a vertex set. We prove that this generalisation holds in the case of tournaments, and we show that a natural strengthening of this conjecture does not hold.
\end{abstract}

%\begin{figure}[ht]
%\includegraphics[width=10em]{name}
%\centering
%\end{figure}

\section{Introduction}

In this note, all graphs are finite. \textit{Directed graphs} (or \textit{digraphs}) do not contain parallel edges, but may contain self-loops. \textit{Oriented graphs} are directed graphs with no self-loops and directed 2-cycles. A digraph $G$ on a vertex set $V=V(G)$ can be represented by its set of edges, $E(G)$, a subset of $V\times V$. The \textit{first neighbourhood} or \textit{out-neighbourhood} of a vertex $v$ in $G$ is the set $\{u\in V(G)\mid (v,u)\in E(G)\}$, denoted by $N_G^+(v)$. The \textit{in-neighbourhood} of $v$ is the set $\{u\in V(G)\mid (u,v)\in E(G)\}$, denoted by $N_G^-(v)$. We set $d_G^+(v)=|N_G^+(v)|$ and $d_G^-(v)=|N_G^-(v)|$. The \textit{second neighbourhood} or \textit{second out-neighbourhood} of a vertex $v\in V(G)$ is the set $(\bigcup_{u\in N_G^+(v)}N_G^+(u))\backslash N_G^+(v)$, denoted by $N_G^{++}(v)$. We also set $d_G^{++}(v)=|N_G^{++}(v)|$. Similarly define the \textit{second in-neighbourhood} $N_G^{--}(v)$ and $d_G^{--}(v)$. We omit the subscripts $G$ if the context is clear. In 1990, Seymour conjectured the following statement:
\begin{conj}[Seymour, see \cite{dean}]
Every oriented graph has a vertex $v$ satisfying $d^{++}(v)\geq d^+(v)$.
\end{conj}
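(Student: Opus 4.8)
The plan is to argue by contradiction, assuming $G$ is an oriented graph with no vertex satisfying $d^{++}(v)\ge d^{+}(v)$ (a \emph{Seymour vertex}), so that every vertex has $d^{++}(v)\le d^{+}(v)-1$. First I would write down the only exact global identities available, $\sum_{v\in V(G)} d^{+}(v)=|E(G)|=\sum_{v\in V(G)} d^{-}(v)$, and try to contradict the resulting bound $\sum_{v} d^{++}(v)\le |E(G)|-|V(G)|$ by a sufficiently good lower estimate on $\sum_{v} d^{++}(v)$. The obstacle is already visible here: $d^{++}(v)$ is not additive over length-two walks, since the raw count $\sum_{u\in N^{+}(v)} d^{+}(u)$ overcounts $N^{++}(v)$ whenever two out-neighbours of $v$ share a common out-neighbour or reach back into $N^{+}(v)\cup\{v\}$. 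A plain uniform summation is therefore too weak: the signed quantity $\sum_{v}\bigl(d^{++}(v)-d^{+}(v)\bigr)$ can be negative even when a Seymour vertex exists, so one cannot conclude by averaging with equal weights alone.

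This motivates the second, Fisher-type approach: rather than weight vertices uniformly, seek a probability distribution $p$ on $V(G)$ and argue by a minimax / linear-programming duality that $\sum_{v} p(v)\,\bigl(d^{++}(v)-d^{+}(v)\bigr)\ge 0$ for a well-chosen $p$, forcing some $v$ with $p(v)>0$ to satisfy $d^{++}(v)\ge d^{+}(v)$. In the tournament case this is exactly what succeeds, because the tournament's adjacency structure gives enough linear relations to solve the dual program. For a general oriented graph I would set up the analogous linear program, with the ``missing'' edges entering as slack variables, and try to certify feasibility of the distribution that the tournament argument produces.

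The third, more combinatorial route is the median-order technique that settles tournaments: take an ordering $v_{1},\dots,v_{n}$ of $V(G)$ maximising the number of forward edges $(v_{i},v_{j})$ with $i<j$, and aim to show the last vertex $v_{n}$ is a Seymour vertex by injecting $N^{-}(v_{n})$ into $N^{++}(v_{n})$ along length-two paths $v_{n}\to u\to w'$. The main obstacle, which is precisely why the conjecture remains open, is that oriented graphs lack the comparability of tournaments: a pair of vertices may have no edge between them, so both the feedback/exchange inequalities that force the local domination property of a median order and the slack estimates in the linear program can fail, and the overcount in the counting argument is no longer bounded by $|V(G)|$. I therefore expect all three approaches to stall at the same point, and the realistic deliverable is a proof valid only under an additional structural hypothesis --- consistent with the restriction to tournaments and the generalisation the paper actually establishes --- obtained by isolating the minimal assumption under which the median-order injection (equivalently, the dual feasibility) survives, and proving the statement in that regime while pinpointing where the assumption is used.
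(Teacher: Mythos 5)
The statement you were asked to prove is Seymour's Second Neighbourhood Conjecture itself, which the paper states as a \emph{conjecture} and does not prove --- it remains open, so no blind proof attempt could legitimately succeed, and yours correctly does not claim to. Your proposal is an accurate map of the known obstructions rather than a proof: the uniform-averaging identity indeed fails (the signed sum $\sum_v \bigl(d^{++}(v)-d^+(v)\bigr)$ can be negative even when a Seymour vertex exists), and both the Fisher-type weighting and the median-order injection (the Havet--Thomass\'e argument for tournaments) break precisely where comparability of vertex pairs is lost. Your fallback deliverable --- a proof valid only under an additional structural hypothesis --- is exactly what the paper delivers: its Theorem 4 proves a two-graph generalisation under the tournament-like hypothesis $A\cup B^T=V\times V$, using Fisher's losing-density technique, i.e. a weight function $l$ with $l(N_G^+(v))\geq l(N_G^-(v))$ for all $v$ and equality whenever $l(v)>0$; this is precisely the dual object your second (linear-programming) approach asks for, and its existence is what fails to be certified for general oriented graphs. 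The one point worth flagging is presentational: what you wrote is a research plan whose honest conclusion is that the general statement is out of reach, not a proof, conditional or otherwise; it should be framed as such, which is consistent with the paper's own treatment (conjecture stated, equivalence of weighted and unweighted forms proved in Theorem 2, and only the tournament case of the generalisation established).
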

A vertex $v$ of $G$ is said to satisfy the \textit{second neighbourhood property (SNP)} if $d^{++}(v)\geq d^+(v)$. $G$ is said to satisfy the \textit{second neighbourhood conjecture (SNC)} if it has a vertex satisfying SNP.

In 1996, Fisher \cite{tournament} proved the conjecture for tournaments, i.e. oriented graphs with an edge between every pair of vertices.

It will be convenient to consider also a weighted version of SNC. Suppose $G$ is weighted by a non-negative real-valued function $\omega:V(G)\to \mathbb{R}_{\geq 0}$. The \textit{weight} of a set of vertices is the sum of the weights of its members. We say that a vertex $v$ of $G$ satisfies the \textit{weighted second neighbourhood property (WSNP)} if $\omega(N^{++}(v))\geq \omega(N^+(v))$. We say $G$ satisfies the \textit{weighted second neighbourhood conjecture (WSNC)} if for every such function $\omega$, there is a vertex $v$ satisfying WSNP.

\begin{thm}
The following are equivalent:
\begin{enumerate}
\item Every oriented graph satisfies the second neighbourhood conjecture.
\item Every oriented graph satisfies the weighted second neighbourhood conjecture.
\end{enumerate}
\end{thm}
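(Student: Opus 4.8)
The plan is to prove the two implications separately. The direction \emph{(2) $\Rightarrow$ (1)} is immediate: SNC is the special case of WSNC obtained by taking the constant weight function $\omega\equiv 1$, since then WSNP at a vertex $v$ reads $d^{++}(v)\ge d^+(v)$, which is exactly SNP. So if WSNC holds for every oriented graph, the all-ones weighting already produces a vertex satisfying SNP. All the content is in the converse.

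For \emph{(1) $\Rightarrow$ (2)} I would fix an oriented graph $G$ and regard a weighting as a point $\omega\in\mathbb{R}_{\ge 0}^{V(G)}$. For each vertex $v$ the quantity $f_v(\omega):=\omega(N^{++}(v))-\omega(N^+(v))$ is \emph{linear} in $\omega$, so $g(\omega):=\max_{v\in V(G)}f_v(\omega)$ is continuous and the set $\{\omega:g(\omega)\ge 0\}$ of weightings for which $G$ satisfies WSNC is closed. Since the strictly positive rational vectors are dense in $\mathbb{R}_{\ge 0}^{V(G)}$, it suffices to verify WSNC for every strictly positive rational weighting; continuity then propagates the conclusion to all of $\mathbb{R}_{\ge 0}^{V(G)}$. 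Moreover each $f_v$ is homogeneous, so the truth of WSNP at $v$ is unchanged when $\omega$ is scaled by a positive constant; clearing denominators, I may therefore assume $\omega$ takes values in the positive integers.

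The key step is then a blow-up (vertex-multiplication) construction. I would build an oriented graph $G'$ by replacing each vertex $v$ with an independent set $S_v$ of $\omega(v)\ge 1$ copies, and for each edge $(u,v)\in E(G)$ inserting every edge from $S_u$ to $S_v$. Because $G$ has no loop and no directed $2$-cycle, and the copies inside each $S_v$ are pairwise non-adjacent, $G'$ is again an oriented graph. The point is that the neighbourhood parameters of a copy transfer exactly: for $v'\in S_v$ one has $N^+_{G'}(v')=\bigcup_{u\in N^+_G(v)}S_u$, whence $d^+_{G'}(v')=\omega(N^+_G(v))$; and because the blocks $S_x$ are disjoint, subtracting the first neighbourhood commutes with the blow-up, giving $N^{++}_{G'}(v')=\bigcup_{x\in N^{++}_G(v)}S_x$ and hence $d^{++}_{G'}(v')=\omega(N^{++}_G(v))$. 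Consequently $v'$ satisfies SNP in $G'$ if and only if $v$ satisfies WSNP in $G$.

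To finish, I apply hypothesis (1) to the oriented graph $G'$ to obtain a vertex satisfying SNP; this vertex lies in some block $S_v$, and by the equivalence just established the corresponding $v$ satisfies WSNP in $G$, completing the reduction. I expect the main obstacle to be verifying that the blow-up preserves \emph{both} neighbourhood counts simultaneously — in particular that the set difference defining $N^{++}$ behaves correctly under vertex multiplication. Restricting to strictly positive (rather than merely non-negative) rational weights via the density argument is precisely what lets me avoid the degenerate case $S_v=\emptyset$, where a vertex reachable only through a zero-weight intermediary would drop out of the second neighbourhood and the identity $d^{++}_{G'}(v')=\omega(N^{++}_G(v))$ could fail.
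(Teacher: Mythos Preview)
Your proof is correct and follows essentially the same approach as the paper: reduce to positive integer weights via density of positive rationals, continuity of $\max_v f_v$, and homogeneity, then blow up each vertex into $\omega(v)$ copies and apply SNC to the resulting oriented graph. Your write-up is in fact more careful than the paper's, spelling out why the set difference defining $N^{++}$ commutes with the blow-up and why restricting to strictly positive weights is needed to avoid empty blocks.
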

\begin{proof}
Clearly (2) implies (1). Supposing (1), we first show WSNC holds for positive integer weights $\omega:V(G)\to \mathbb{N}$, hence holds for positive rational weights. By continuity, WSNC holds for all non-negative real weights.

Given a weight $\omega:V(G)\to \mathbb{N}$, consider the graph $G'$ formed by duplicating each vertex $v$ $\omega(v)$-many times to get vertices $v_1,\ldots,v_{\omega(v)}$, with edges $(u_i,v_j)$ whenever $(u,v)$ is an edge of $G$, over all possible $i,j$. We call this process \textit{blowing up} vertex $v$ with weight $\omega(v)$. Then $G$ having the WSNP is equivalent to $G'$ having the SNP, hence $G$ satisfies the WSNC.
\end{proof}

\section{A generalisation}
We start with a generalisation which turns out to be false and give a counterexample with 36 vertices. Then we give a modification of the generalisation which we believe is true.

Let $A,B$ be digraphs on the same vertex set $V$. Recall that a digraph can be viewed as a subset of $V\times V$, so standard set operations can be performed on them. Let $I=\{(v,v)\mid v\in V\}$ be the \textit{identity graph}. The \textit{transpose} (or \textit{inverse}) of $A$ is defined by $A^T=\{(u,v)\mid (v,u)\in A\}$, i.e. the graph with all edges of $A$ reversed. We define the \textit{product graph} $AB$ to be the subset $\{(u,v)\mid \exists w\in V,(u,w)\in A,(w,v)\in B\}$. Set $A(v)=\{u\in V\mid (v,u)\in A\}$ for $v\in V$. 

The second neighbourhood conjecture can be reformulated in the following way: $A$ is a directed graph with $A\cap A^T=I$. Then there is a vertex $v$ such that $|AA(v)|\geq 2|A(v)|-1$. 

This leads to a natural generalisation: Let $A,B$ be directed graphs on a vertex set $V$ such that $A\cap B^T=I$. Is there a vertex $v$ such that $|AB(v)|\geq |A(v)|+|B(v)|-1$? The original SNC is just with $A=B$.

Similar to SNC, this has a weighted version $\omega(AB(v))\geq \omega(A(v))+\omega(B(v))-\omega(v)$, which is equivalent to the unweighted one. However, the weighted version turns out to be false. We provide 2 counterexamples, both with $A,B$ having no 2-cycles.

Our first counterexample has $A\subset B$. We give the weights of each vertex and out-neighbours of $A,B,AB$ in the table below:
\begin{center}
\begin{tabular}{c|c|c|c|c}
V & weight & $A$ & $B$ & $AB$\\\hline
1 & 7 & 1,2,5,6 & 1,2,5,6 & 1,2,3,4,5,6\\
2 & 3 & 2,3 & 2,3,4 & 1,2,3,4,5\\
3 & 11 & 1,3,4,5 & 1,3,4,5 & 1,2,3,4,5,6\\
4 & 3 & 1,4 & 1,4,6 & 1,2,4,5,6\\
5 & 3 & 2,5,6 & 2,4,5,6 & 2,3,4,5,6\\
6 & 9 & 2,3,6 & 2,3,6 & 1,2,3,4,5,6
\end{tabular}
\end{center}
Thus by blowing up the vertices with the appropriate weights, we obtain a counterexample to the unweighted generalisation with 36 vertices. Our second counterexample has $B\subset A$, described below:
\begin{center}
\begin{tabular}{c|c|c|c|c}
V & weight & $A$ & $B$ & $AB$\\\hline
1 & 17 & 1,2,5,6 & 1,2,5,6 & 1,2,3,4,5,6\\
2 & 11 & 2,3,4 & 2,3,4 & 1,2,3,4,5\\
3 & 15 & 1,3,6 & 1,3 & 1,2,3,5,6\\
4 & 8 & 1,3,4,5 & 3,4,5 & 1,2,3,4,5,6\\
5 & 5 & 2,3,5 & 2,3,5 & 1,2,3,4,5\\
6 & 8 & 2,4,5,6 & 2,5,6 & 2,3,4,5,6
\end{tabular}
\end{center}
This yields an unweighted counterexample with 64 vertices after blowing up.

By replacing $AB$ with $AB\cup BA$, we obtain a weaker generalised conjecture:
\begin{conj}
Let $A,B$ be directed graphs on a vertex set $V$ such that $A\cap B^T=I$. Then there is a vertex $v$ such that $|(AB\cup BA)(v)|\geq |A(v)|+|B(v)|-1$.
\end{conj}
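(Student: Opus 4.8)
To support this conjecture I would first establish it in the \emph{tournament case}, where for every pair of distinct vertices $u,v$ exactly one of $(u,v)\in A$ or $(v,u)\in B$ holds; equivalently $A\cup B^T=V\times V$. When $A=B$ this says precisely that $A$ is a tournament, and the statement becomes Seymour's conjecture for tournaments, i.e. Fisher's theorem \cite{tournament}. So the target is a common generalisation of Fisher's theorem, and I would prove it by adapting the median-order approach.

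The first step is to recast the inequality as the existence of an injection. For every vertex $v$ one has $A(v)\cup B(v)\subseteq (AB\cup BA)(v)$: indeed $u\in A(v)$ gives $u\in AB(v)$ via the self-loop at $u$, and $w\in B(v)$ gives $w\in AB(v)$ via the self-loop at $v$. Writing $D(v)=(A(v)\cap B(v))\setminus\{v\}$ and noting $|A(v)|+|B(v)|-1=|A(v)\cup B(v)|+|D(v)|$, the conclusion at $v$ is equivalent to the existence of an injection from $D(v)$ into $(AB\cup BA)(v)\setminus(A(v)\cup B(v))$. In the tournament case $v\to_B w$ iff $w\not\to_A v$, so $D(v)=\{w\neq v:\,v\to_A w,\ w\not\to_A v\}$ is the set of vertices strictly dominated by $v$ in $A$; when $A=B$ this is $N^+_A(v)$ and the required injection is exactly the one in the Havet--Thomass\'e median-order proof of Fisher's theorem.

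The second step is to choose $v$ by a median order. I would order $V$ as $v_1,\dots,v_n$ so as to maximise $F=\sum_{i<j}\bigl(\mathbf 1[v_i\to_A v_j]+\mathbf 1[v_i\to_B v_j]\bigr)$, the total number of forward $A$- and $B$-arcs, and take $v=v_n$ to be the feed vertex. Comparing $F$ with its value after moving $v_n$ leftward past a suffix block $v_i,\dots,v_{n-1}$ yields the local domination property: for every $i$, the number of $A$- or $B$-arcs from $\{v_i,\dots,v_{n-1}\}$ into $v_n$ is at least the number of $A$- or $B$-arcs out of $v_n$ into that set. Taking the full suffix, and using that in the tournament case the $n-1$ pairs at $v_n$ split between $A$ and $B$ in two complementary ways, gives the (off-diagonal) degree inequality $d^-_A(v_n)\ge d^+_A(v_n)$, which is the arithmetic input the injection will need.

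The final and hardest step is to build the injection from $D(v_n)$ into $(AB\cup BA)(v_n)\setminus(A(v_n)\cup B(v_n))$. For $w\in D(v_n)$ we have both $v_n\to_A w$ and $v_n\to_B w$, so $B(w)\subseteq AB(v_n)$ and $A(w)\subseteq BA(v_n)$; the task is to select, for each such $w$, a distinct witness $x\in(A(w)\cup B(w))\setminus(A(v_n)\cup B(v_n))$. I would process the vertices of $D(v_n)$ from right to left and use the local domination property of the suffix ending at $v_n$ to produce a fresh witness at each step, as in Havet--Thomass\'e, or else package the whole selection as a Hall-type condition deduced from the same inequality. \textbf{The main obstacle is exactly this injection.} Two features absent from the ordinary tournament case make it delicate: there are two arc types, so a witness may arise through $AB$ \emph{or} through $BA$ and the local count must track both; and even in the tournament case $A$ and $B$ may contain $2$-cycles and missing arcs (a pair may be an $A$-$2$-cycle carrying no $B$-arc), so the clean dichotomy ``$v_i\to v_j$ or $v_j\to v_i$'' is lost and the interval estimates must be redone in their full, all-suffix strength. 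Ensuring that the witnesses can be kept both outside $A(v_n)\cup B(v_n)$ and pairwise distinct is where I expect the real work to lie.
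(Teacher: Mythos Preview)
Your recasting in the first step is correct, and the paper too only establishes the tournament case $A\cup B^T=V\times V$; the full conjecture remains open there as well. But the paper's argument is genuinely different from your median-order plan, and it is complete precisely where yours is not.

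The paper first observes that one may replace $(A,B)$ by $(A\cap B,\,A\cup B)$: this preserves both $A\cap B^T=I$ and $A\cup B^T=V\times V$, leaves $|A(v)|+|B(v)|$ unchanged, and only shrinks $(AB\cup BA)(v)$. Hence one may assume $A\subset B$. It then works on the oriented graph $G$ obtained from $A$ by deleting the diagonal and applies Fisher's \emph{losing density} $l$ (a probability weight with $l(N_G^+(v))\ge l(N_G^-(v))$ for all $v$, with equality whenever $l(v)>0$). Writing $C=AB\cup BA$, the heart of the proof is that for every $v$ one has $l(C^T(v)\setminus B^T(v))\ge l(A^T(v)\setminus\{v\})$; this is shown by locating, inside the complement $Q=V\setminus C^T(v)\cup\{v\}$, a vertex $u$ with $l(u)>0$ and $l(N_Q^-(u))\ge l(N_Q^+(u))$, and then using the tournament hypothesis to route all of $A^T(v)\setminus\{v\}$ into $N_G^-(u)$ and to keep $N_G^+(u)$ inside $N_Q^+(u)\cup(C^T(v)\setminus B^T(v))$. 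A double-counting swap of $l$ against the given weight $\omega$ then yields a vertex $v$ with $\omega(C(v)\setminus B(v))\ge\omega(A(v)\setminus\{v\})$, which is exactly your injection inequality in weighted form.

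So the paper never builds an explicit injection and never uses a median order; the losing density absorbs the combinatorial selection you were worried about into a single averaging identity, and as a bonus gives the weighted statement directly. Your outline is a reasonable alternative programme, and your reformulation and the degree inequality $d_A^-(v_n)\ge d_A^+(v_n)$ are correct, but the gap you flag --- producing pairwise distinct witnesses outside $A(v_n)\cup B(v_n)$ when $A$ may carry $2$-cycles and $B$ is determined only complementarily --- is real, and the Havet--Thomass\'e interval counts do not transfer without substantial new work. If you want to pursue the combinatorial route, the reduction to $A\subset B$ is worth adopting first: then $D(v)=A(v)\setminus\{v\}$, $A(v)\cup B(v)=B(v)$, and the target set becomes $(AB\cup BA)(v)\setminus B(v)$, which aligns the problem much more closely with the classical second-neighbourhood picture.
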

The weighted version is as follows:
\begin{conj}
Let $A,B$ be directed graphs on a vertex set $V$ such that $A\cap B^T=I$. Then for any non-negative weight function $\omega$, there is a vertex $v$ such that $\omega((AB\cup BA)(v))\geq \omega(A(v))+\omega(B(v))-\omega(v)$.
\end{conj}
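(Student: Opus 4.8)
The plan is to prove the conjecture in its main tractable case---when $A$ and $B$ are \emph{complete}, meaning $A\cup B^T=V\times V$, which is the natural tournament analogue and specialises to Fisher's theorem when $A=B$---and to isolate the general case as the genuine obstacle. By the blow-up construction used in the proof of Theorem 1 (duplicating a vertex $v$ into $\omega(v)$ copies preserves $A\cap B^T=I$ and scales each out-set by the relevant weights), it suffices throughout to prove the \emph{unweighted} inequality $|(AB\cup BA)(v)|\ge |A(v)|+|B(v)|-1$, and I will work with that form.

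In the complete case I would first reformulate the data as a $2$-colouring of the complete digraph on $V$: colour each off-diagonal ordered pair $(u,w)$ \emph{red} if $(u,w)\in A$ and \emph{blue} if $(w,u)\in B$; the hypotheses $A\cap B^T=I$ and $A\cup B^T=V\times V$ say precisely that every such pair receives exactly one of the two colours. In this language $A(u)$ is the set of red out-neighbours of $u$ together with $u$, while $B(u)$ is the set of blue in-neighbours of $u$ together with $u$; the loop shared by $A(u)$ and $B(u)$ at $u$ is exactly what accounts for the $-1$ slack. The product $AB$ records the vertices reached first along $A$ and then along $B$, and $BA$ those reached first along $B$ and then along $A$, and the role of the union (which the counterexamples show to be necessary) is to let each two-step continuation be taken in whichever order is convenient.

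The engine of the argument will be a \emph{median order}: I would fix a linear order $v_1\prec\cdots\prec v_n$ of $V$ maximising a suitable count of monochromatic arcs (for instance the number of forward red arcs together with the number of backward blue arcs, a choice I would tune so that both the red out-structure and the blue in-structure are controlled at an extreme vertex), and extract the usual local optimality via the single-vertex relocation argument: on every interval the extreme vertices dominate at least half of the interval in the relevant colour. I would then show that the appropriate extreme vertex---$v_1$ for the objective above---satisfies the required inequality by constructing an injection from $A(v_1)\cup B(v_1)$ into $(AB\cup BA)(v_1)$, attaching to each red out-neighbour a blue continuation guaranteed by the interval property and symmetrically to each blue in-neighbour a red continuation, using the freedom of the union to choose the colour that keeps the images distinct. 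When the colouring is antisymmetric, i.e. $A=B$, this collapses to the Havet--Thomass\'e proof of Dean's conjecture.

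The step I expect to be the \textbf{main obstacle} is verifying injectivity of this map once the two colours genuinely differ, since a single median order need not be simultaneously well-structured for the red and the blue sub-structures, and reconciling the two interval decompositions is delicate; an alternative route I would test in parallel is to encode the $2$-coloured digraph as a single tournament by a vertex-doubling gadget and deduce the inequality from Fisher's weighted theorem, should the gadget be arranged so that its second neighbourhoods reproduce $AB\cup BA$. For the fully general conjecture, without completeness, the difficulty is sharper still, because median orders lose their interval guarantees; there I would attempt to complete $A$ and $B$ to a $2$-coloured complete digraph while controlling the growth of $(AB\cup BA)(v)$, but I expect this completion to require a genuinely new idea, which is why the statement is left as a conjecture.
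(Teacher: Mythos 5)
The statement you are addressing is left as a conjecture in the paper; what the paper actually proves is precisely your ``main tractable case'' (its Theorem 4: $A\cap B^T=I$ and $A\cup B^T=V\times V$, with weights). Measured against that, your proposal has a genuine gap: the entire content of your argument --- the injection from $A(v_1)\cup B(v_1)$ into $(AB\cup BA)(v_1)$ at the extreme vertex of a median order --- is never constructed, and you yourself flag its injectivity as the main obstacle. That obstacle is real: the interval property of median orders (Havet--Thomass\'e) is a statement about a single tournament relation, and once the red ($A$) and blue ($B$) structures genuinely differ there is no reason one linear order is simultaneously locally optimal for both; nothing in the proposal says how to reconcile the two, so what you have is a research plan, not a proof. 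The paper's proof of the tournament case avoids median orders entirely: after reducing to $A\subset B$ (which forces $A$ to have no 2-cycles off the diagonal), it takes the oriented graph $G$ with edges $\{(u,v)\mid u\neq v,\ (u,v)\in A\}$, invokes Fisher's existence of a \emph{losing density} $l$ (a weight with $l(N_G^+(v))\geq l(N_G^-(v))$ for all $v$, with equality whenever $l(v)>0$), proves for every $v$ the comparison $l(C^T(v)\setminus B^T(v))\geq l(A^T(v)\setminus\{v\})$ (where $C=AB\cup BA$) by an averaging argument inside $Q=V\setminus(C^T(v)\cup\{v\})$, and concludes by a weighted double-counting sum. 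That route is complete, handles weights natively, and recovers Fisher's proof when $A=B$.

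There is also a more localized error in your reduction ``it suffices to prove the unweighted inequality.'' Within the complete case the blow-up cannot be performed naively: if distinct copies of a vertex are left unrelated (loops blow up only to loops), then $A'\cup B'^T=V'\times V'$ fails; if copies are joined symmetrically in both $A'$ and $B'$ (loops blow up to complete joins, as in the paper's Theorem 1 construction), then $A'\cap B'^T=I'$ fails. So your parenthetical claim that duplication ``preserves $A\cap B^T=I$ and scales each out-set'' is false as stated for the case you intend to treat. The reduction can be repaired --- join distinct copies asymmetrically, say by $A$-2-cycles and no $B$-edges, and redo the bookkeeping (note then $|B'(v_i)|=\omega(B(v))-\omega(v)+1$, not $\omega(B(v))$, so the sets do not simply scale) --- but this repair has to be carried out, and it matters for your plan, since your median-order argument needs the blown-up pair to remain complete. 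This tension between weights and blow-ups is exactly why the paper's Fisher-style density argument, which is intrinsically weighted, closes the tournament case, whereas the median-order approach is only known to give the unweighted statement even in the classical case $A=B$.
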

This generalises SNC by setting $A=B$. By replacing $A,B$ with $A\cap B,A\cup B$, we may assume that $A\subset B$. The above counterexamples show that the union $AB\cup BA$ is required, even when $A\subset B$ or $B\subset A$. The tournament version of the above conjecture holds; we give a proof based on the technique of winning and losing densities of Fisher \cite{tournament}:
\begin{thm}
Let $A,B$ be directed graphs on a vertex set $V$ such that $A\cap B^T=I$ and $A\cup B^T=V\times V$. Let $\omega:V\to\mathbb{R}_{\geq 0}$ be a weight function. Then there is a vertex $v$ such that $\omega((AB\cup BA)(v))\geq \omega(A(v))+\omega(B(v))-\omega(v)$.
\end{thm}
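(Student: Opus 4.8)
My plan is to recast the desired inequality as a statement about a single auxiliary oriented graph and then to average it against a cleverly chosen probability distribution, mirroring Fisher's winning/losing-density argument for tournaments. First I would unwind the hypotheses combinatorially. Since $A\cap B^T=I$ and $A\cup B^T=V\times V$, for every ordered pair $(u,v)$ with $u\neq v$ exactly one of $(u,v)\in A$ or $(v,u)\in B$ holds, so $B$ is forced by $A$. A short computation then shows that a vertex $w\neq v$ fails to lie in $(AB\cup BA)(v)$ precisely when $(v,w)\notin A\cup B$ (equivalently $(w,v)\in A\cap B$) together with $A(v)\subseteq A(w)$ and $B(v)\subseteq B(w)$. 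Writing $v\Rightarrow w$ for the relation ``$(v,w)\in A\cap B$ and $(w,v)\notin A\cup B$'' (which defines an oriented graph $D$ on $V$, with neighbourhoods $D^+,D^-$) and $w\succeq v$ for ``$A(v)\subseteq A(w)$ and $B(v)\subseteq B(w)$'', this identifies $\Delta(v):=\omega((AB\cup BA)(v))-\omega(A(v))-\omega(B(v))+\omega(v)$ with
\[
\Delta(v)=\omega\big(\{w: w\Rightarrow v,\ w\not\succeq v\}\big)-\omega\big(\{w: v\Rightarrow w\}\big).
\]
Thus it suffices to produce one vertex whose non-dominating in-neighbours in $D$ outweigh its out-neighbours.

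The heart of the argument is the choice of averaging weights. I would let $\lambda$ be a \emph{winning density} of $D$, an optimal mixed strategy of the skew-symmetric matrix game attached to $D$; it exists by the minimax theorem and satisfies $\lambda(D^+(v))\ge\lambda(D^-(v))$ for all $v$, with equality whenever $\lambda_v>0$. The key structural lemma is that $\lambda$ gives zero mass to every strict dominator, i.e.\ to every $a$ admitting an arc $a\Rightarrow b$ with $a\succeq b$. This is where the containments do the work: from $a\succeq b$ together with $a\Rightarrow b$ one checks $D^+(a)\supseteq D^+(b)\cup\{b\}$ and $D^-(a)\subseteq D^-(b)\setminus\{a\}$, whence $\lambda(D^+(a))\ge\lambda(D^+(b))+\lambda_b$ and $\lambda(D^-(b))\ge\lambda(D^-(a))+\lambda_a$; feeding these into the winning inequality at $b$ and the complementary-slackness equality at $a$ yields $\lambda(D^-(a))=\lambda(D^+(a))\ge\lambda(D^-(a))+\lambda_a+\lambda_b$, forcing $\lambda_a=0$. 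Hence $\lambda$ is supported on the non-dominators.

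It then remains to prove $\sum_v\lambda_v\,\Delta(v)\ge0$, for then some vertex in the support satisfies $\Delta(v)\ge0$ and we are done. Expanding the two sums and relabelling arcs reduces this, via the lemma, to
\[
\sum_u\omega_u\big(\lambda(D^+(u))-\lambda(D^-(u))\big)\ \ge\ \sum_{a\Rightarrow v,\ a\succeq v}\omega_a\,\lambda_v,
\]
that is, the nonnegative ``winning surplus'' supplied by the dominators must dominate the penalty coming from the domination pairs. This last inequality is the main obstacle, and it is exactly the point at which the set-theoretic nature of $AB\cup BA$ (as opposed to a multiset of length-two walks) enters, through the correction term $\{w:w\succeq v\}$. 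I expect to handle it by exploiting that the domination arcs form an acyclic relation — $\succeq$ is a partial order and, as the lemma's proof shows, a head never dominates its tail — so that one may process dominators from $\succeq$-maximal downward and route each unit of penalty into the surplus at the dominating vertex, using the slackness equalities at the dominated vertices. The dual \emph{losing} density enters when the roles of $A$ and $B$ are interchanged (equivalently, on passing to transposes), and verifying that these two bookkeeping schemes are compatible is the technical crux I anticipate spending the most effort on.
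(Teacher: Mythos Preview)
Your reduction is correct and matches the paper's framework exactly: with the density $\lambda$ on $D=(A\cap B)\setminus I$, swapping the order of summation in $\sum_v\lambda_v\Delta(v)$ turns the problem into the termwise inequality
\[
\lambda\bigl(\{w\in D^+(v):v\not\succeq w\}\bigr)\ \ge\ \lambda\bigl(D^-(v)\bigr)\qquad\text{for every }v,
\]
which is precisely the paper's $l(S_2)\ge l(S_1)$. The gap is that you do not prove this inequality. Your lemma that strict dominators carry zero $\lambda$-mass is correct, but it is orthogonal to what is needed: the termwise inequality is weighted by $\omega_v$, not by $\lambda_v$, so knowing $\lambda_a=0$ for a dominator $a$ does not touch the term at $a$. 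Your proposed ``routing'' scheme, when unpacked, is nothing but the same termwise inequality again, and the acyclicity of the domination arcs gives no leverage on it. The closing remark about a dual losing density and interchanging $A$ and $B$ is not needed and seems to be a misdiagnosis of where the difficulty lies.

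The paper's argument for $l(S_2)\ge l(S_1)$ is different from anything you sketch. Fix $v$ and set $Q=\{v\}\cup\{w:v\Rightarrow w,\ v\succeq w\}$, so that $D^+(v)=S_2\sqcup(Q\setminus\{v\})$. If $l(Q)=0$ the winning inequality at $v$ already gives $l(S_1)\le l(S_2)$. If $l(Q)>0$, the standard double-count inside the induced tournament on $Q$ produces some $u\in Q$ with $l(u)>0$ and $l(N_Q^-(u))\ge l(N_Q^+(u))$. The point you are missing is that because every $u\in Q\setminus\{v\}$ satisfies $u\notin C^T(v)$, the tournament hypothesis forces $D^-(u)\supseteq N_Q^-(u)\cup S_1$ and $D^+(u)\subseteq N_Q^+(u)\cup S_2$; combining these with the complementary-slackness equality $l(D^+(u))=l(D^-(u))$ yields $l(S_2)\ge l(S_1)$. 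This localisation to $Q$ is the missing idea; once you insert it, your write-up becomes the paper's proof.
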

\begin{proof}
We first assume that $A\subset B$. Set $C=AB\cup BA$. Consider the oriented graph $G$ with edges $\{(u,v)\mid u\neq v,(u,v)\in A\}$. By Theorem 1 of \cite{tournament}, $G$ has a losing density. A losing density $l$ is a weight function satisfying $l(N_G^+(v))\geq l(N_G^-(v))$ for all vertex $v$. Further, if $l(v)>0$, then $l(N_G^+(v))= l(N_G^-(v))$. Fix any vertex $v$, set $S_1=N_G^-(v)=A^T(v)\setminus\{v\},S_2=C^T(v)\setminus B^T(v)$. We show that $l(S_2)\geq l(S_1)$. Let $Q$ be the subgraph $V\setminus C^T(v)\cup\{v\}$. This partitions $V$ into the sets $S_1\cup S_2\cup Q\cup (B^T(v)\setminus A^T(v))$. If $l(Q)=0$, then $S_1=N_G^-(v)$ and $N_G^+(v)\subset S_2\cup Q$, thus $l(S_1)\leq l(S_2\cup Q)=l(S_2)$. Otherwise, we have $l(Q)>0$. Within $Q$, we have
\begin{align*}
\sum_{v\in V(Q)}l(v)l(N_Q^-(v)) &= \sum_{v\in V(Q)} \sum_{u\in N_Q^-(v)} l(v)l(u)\\
&=\sum_{v\in V(Q)}\sum_{u\in N_Q^+(v)} l(v)l(u) =\sum_{v\in V(Q)}l(v)l(N_Q^+(v)).
\end{align*}
Thus we have $l(N_Q^-(u))\geq l(N_Q^+(u))$ for some $u\in Q$ with $l(u)>0$. For each $w\in S_1$, we cannot have $(w,u)\in B^T$, since otherwise we have $u\in A^TB^T(v)$. Since $A\cup B^T=V\times V$, we must have $(u,w)\in A^T$, hence $N_G^-(u)\supset N_Q^-(u)\cup S_1$. For each $x\in B^T(v)\setminus A^T(v)$, we cannot have $(x,u)\in A^T$, since otherwise $u\in B^TA^T(v)$. Thus we get $N_G^+(u)\subset N_Q^+(u)\cup S_2$. Since $l(u)>0$, $l(N_G^+(u))=l(N_G^-(u))$. From $l(N_Q^-(u))\geq l(N_Q^+(u))$, we deduce that $l(S_2)\geq l(S_1)$.

We want to show there is a $v$ such that $\omega(C(v)\setminus B(v))\geq \omega(A(v)\setminus\{v\})$. In fact, we show stronger statement that
$$\sum_{v\in V} l(v)(\omega(C(v)\setminus B(v))-\omega(A(v)\setminus\{v\}))\geq 0,$$
then we are done. The above sum is equal to
$$\sum_{v\in V} \omega(v)(l(C^T(v)\setminus B^T(v))-l(A^T(v)\setminus\{v\})).$$
But the sets $C^T(v)\setminus B^T(v)$ and $A^T(v)\setminus\{v\}$ are just $S_2$ and $S_1$ defined above corresponding to $v$, and $l(S_2)\geq l(S_1)$, thus the above sum is non-negative.
\end{proof}
We remark that, setting $A=B$, we recover the usual proof by Fisher \cite{tournament} of the SNC for tournaments.

\end{document}